\documentclass[a4paper,11pt]{amsart}

\usepackage{amscd,amsmath,amssymb,fancyhdr}
\usepackage[utf8]{inputenc}
\usepackage{amsfonts}
\usepackage{amsthm}
\usepackage{accents}
\usepackage{graphicx}
\usepackage{float}
\usepackage{indentfirst}
\usepackage{dsfont}
\usepackage{tikz}
\usetikzlibrary{matrix}
\usepackage{enumerate}

\usepackage[top=1.5in, bottom=1in, left=0.9in, right=0.9in]{geometry}

\usepackage{scalerel,stackengine}
\stackMath
\newcommand\reallywidehat[1]{
	\savestack{\tmpbox}{\stretchto{
			\scaleto{
				\scalerel*[\widthof{\ensuremath{#1}}]{\kern-.6pt\bigwedge\kern-.6pt}https://www.overleaf.com/project/6314d9f019e04269bfa2f21e
				{\rule[-\textheight/2]{1ex}{\textheight}}
			}{\textheight}
		}{0.5ex}}
	\stackon[1pt]{#1}{\tmpbox}
}

\usepackage[backref=page]{hyperref}
\renewcommand*{\backref}[1]{}
\renewcommand*{\backrefalt}[4]{
	\ifcase #1 (Not cited.)
	\or        (Cited on page~#2.)
	\else      (Cited on pages~#2.)
	\fi}

\hypersetup{
	colorlinks   = true,
	citecolor    = magenta
}

\numberwithin{equation}{section}

\renewcommand{\to}{\longrightarrow}

\newcounter{Mycounter}[section]

\newcounter{lemma}[section]
\setcounter{lemma}{0}

\newcounter{claim}[section]
\setcounter{claim}{0}

\newcounter{sublemma}[section]
\setcounter{sublemma}{0}

\newcounter{corollary}[section]
\setcounter{corollary}{0}

\newcounter{theorem}[section]
\setcounter{theorem}{0}

\newcounter{conjecture}[section]
\setcounter{conjecture}{0}

\newcounter{proposition}[section]
\setcounter{proposition}{0}

\newcounter{definition}[section]
\setcounter{definition}{0}

\newcounter{example}[section]
\setcounter{example}{0}

\newcounter{remark}[section]
\setcounter{remark}{0}

\newcounter{problem}[section]
\setcounter{problem}{0}

\newcounter{question}[section]
\setcounter{question}{0}

\makeatletter
    \@addtoreset{equation}{section}
    \@addtoreset{footnote}{section}
\makeatother

\usetikzlibrary{arrows,chains,matrix,positioning,scopes}

\makeatletter
\tikzset{join/.code=\tikzset{after node path={
			\ifx\tikzchainprevious\pgfutil@empty\else(\tikzchainprevious)
			edge[every join]#1(\tikzchaincurrent)\fi}}}
\makeatother

\tikzset{>=stealth',every on chain/.append style={join},
	every join/.style={->}}

\setcounter{tocdepth}{3}

\begin{document}
	
\title{A note on compatibility of special Hermitian structures}
	
\author{Daniele Angella}
\address[Daniele Angella]{
Dipartimento di Matematica e Informatica ``Ulisse Dini"\\
Universit\` a degli Studi di Firenze\\
viale Morgagni 67/A\\
50134 Firenze, Italy
}
\email{daniele.angella@unifi.it}
\email{daniele.angella@gmail.com}
	
\author{Alexandra Otiman}
\address[Alexandra Otiman]{
Institut for Matematik and Aarhus Institute of Advanced Studies\\
Aarhus University\\
8000, Aarhus C, Denmark\\
and\\
Institute of Mathematics ``Simion Stoilow'' of the Romanian Academy\\
21 Calea Grivitei Street\\
010702, Bucharest, Romania
}
\email{alexandra.otiman@imar.ro}
\email{aiotiman@aias.au.dk}

\thanks{
D. A. is partially supported by project PRIN2017 ``Real and Complex Manifolds: Topology, Geometry and holomorphic dynamics'' (code 2017JZ2SW5) and by GNSAGA of INdAM.
A. O. is supported by the European Union’s Horizon 2020 research and innovation programme under the Marie Skłodowska-Curie grant agreement No 754513 and Aarhus
University Research Foundation and by a grant of the Ministry of Research and Innovation, CNCS-UEFISCDI, project no. P1-1.1-PD-2021-0474}

\dedicatory{Dedicated to Professor Victor Vuletescu for his 60th birthday.}

\begin{abstract}
We prove that a compact Vaisman manifold $(M, J)$ cannot admit some type of special Hermitian metrics, such as special $k$-Gauduchon metrics, $p$-K\" ahler forms, Hermitian-symplectic or strongly Gauduchon metrics compatible to the same complex structure $J$. In particular, it cannot admit pluriclosed or balanced metrics. We also investigate the interplay between locally conformally symplectic forms taming the complex structure $J$ and special Hermitian structures.

\hfill

\noindent \textsc{2020 Mathematics Subject Classification:} 53C55; 22E25.

\noindent \textsc{Keywords:} locally conformally K\"ahler, Vaisman manifold, Hermitian metric, pluriclosed metric, SKT metric, balanced metric, Hermitian-symplectic structure, non-K\"ahler Calabi-Yau manifold.

\end{abstract}

\maketitle

\section{Introduction}

In the past decades, constant effort has been put into finding suitable replacements of K\" ahler metrics on compact complex manifolds of non-K\"ahlerian type, which may possibly be canonical and then help towards classification results. Special Hermitian metrics emerged by weakening the K\" ahler condition with either cohomological or analytic conditions. Among the most intensively studied structures, there are {\it pluriclosed} (also known as SKT), {\it balanced}, {\it locally conformally K\" ahler} (LCK for short) and {\it Hermitian-symplectic}. For further details and motivation on each of these geometries, we refer {\itshape e.g.} to \cite{bis89, st10, m82, garcia-fernandez, ov-book, bel00, li-zhang} and references therein.

The aim of this note is to study the interactions between several of these special structures and answer partially the question when they can co-exist on a given compact manifold, as long as they are compatible with the same complex structure. We focus essentially on two kinds of ambient complex manifolds: {\it Vaisman manifolds}, that represent a special type of locally conformally K\" ahler manifolds; and complex manifolds admitting {\it locally conformally symplectic forms taming the complex structure}, which are a  non-metric analogue of LCK structures.
A prototype result in this context is the classical Vaisman theorem \cite[Theorem 2.1]{vai80}, stating that a compact manifold carrying a locally conformally K\"ahler, non-globally conformally K\"ahler, metric cannot be K\" ahlerian.
The motivation for studying this problem comes from the recent conjecture \cite[Conjecture 5.3]{fgv22} stating that a compact, non-K\"ahler, locally conformally K\" ahler manifold cannot be endowed with various different special K\" ahler metrics, such as balanced, pluriclosed or $p$-pluriclosed. Since the locally conformally K\" ahler class sits inbetween two geometries with distinctive sets of techniques, Vaisman and locally conformally symplectic respectively, 
$$ \text{Vaisman} \quad \subset \quad \text{LCK} \quad \subset \quad \text{LCS taming $J$}$$
we endeavour to study the interplay of these with certain non-K\" ahler structures, in order to shed more light on the LCK setting.
On compact complex nilmanifolds, the interaction between locally conformally K\"ahler metrics and several other special non-K\"ahler Hermitian metrics  was studied in \cite{ornea-otiman-stanciu}.

It would also be interesting to study LCK Calabi-Yau solvmanifolds. General Calabi-Yau Vaisman manifolds were studied in \cite{nico} and Vaisman solvmanifolds with holomorphically trivial canonical bundle were studied in \cite{andrada-origlia}, and by Andrada and Tolcachier.

\bigskip

The paper is organized as follows.
In Section \ref{preli}, we briefly present the geometric structures that we will consider in the following.
In Section \ref{sec:metrics}, we prove the incompatibility between Vaisman metrics and several special non-K\" ahler structures, including  pluriclosed, Hermitian-symplectic, astheno-K\"ahler, balanced, holomorphic symplectic, strongly Gauduchon, see \ref{thm:vaisman-pp}.
In Section \ref{sec:LCS-taming}, we study locally conformally symplectic structures taming the complex structure. We show that strict structures of this kind cannot exist on compact complex non-K\"ahler manifolds satisfying a certain cohomological condition, see \ref{generalized-Vaisman}, and on certain fibre bundles, see \ref{prop:LCS-taming-fibrations}. Moreover, we show that the existence of LCS forms taming the complex structure is stable by blowing up special submanifolds, see \ref{prop:LCS-taming-blowup} and by small analytic deformations, see \ref{prop:LCS-taming-deformations}. Finally, we prove that Kato manifolds admit LCS forms that tame the complex structure if and only if they admit LCK metrics, see \ref{prop:kato}.

\bigskip

\noindent {\it Acknowledgements.} The authors are very grateful to Victor Vuletescu for the many discussions over the years, which have been a great source of inspiration, both in mathematics and in life: this note is dedicated to celebrate Victor's 60th birthday. {\it La mul\c{t}i ani! Buon compleanno!}\\
Many thanks also to Liviu Ornea and Miron Stanciu for their comments and suggestions.
We are very grateful to Adrián Andrada and Alejandro Tolcachier for pointing out a mistake in a previous version of this paper.

\section{Preliminaries and notation}\label{preli}

We outline below several types of special non-K\" ahler geometries as well as some non-metric counterparts that have been studied as interesting generalizations of K\" ahler metrics.
As a matter of notation, we are fixing a differentiable manifold $M$ of dimension $2n$ endowed with a structure of complex manifold, equivalently with an integrable almost complex structure $J$. We confuse a Hermitian metric $g$ with its associated $(1,1)$-form $\omega=g(J\cdot, \cdot)$.

\begin{description}

 \item[Locally conformally Kähler] A Hermitian metric $g$ is called {\it locally conformally K\"ahler} (LCK for short) if, locally, it is conformally equivalent to a K\"ahler metric, see \cite{vai76}. Analytically, this is equivalent to ask that  there exists a closed one-form $\theta$ (called the {\em Lee form}) such that $\omega$ satisfies $d\omega=\theta \wedge \omega$. If $\theta$ is exact, the metric is called {\em globally conformally K\"ahler} (gcK for short), otherwise we shall call it {\it strictly} LCK. If the Lee form is parallel w.r.t. the Levi-Civita connection of $g$, the LCK metric is called {\em Vaisman}. For a up-to-date reference to LCK and Vaisman geometries, we refer to \cite{ov-book}.

 \item[Pluriclosed] also referred to in the literature as {\em strongly K\"ahler with torsion}.  A Hermitian metric $g$ is called {\it pluriclosed} if its fundamental form $\omega$ satisfies $dd^c \omega=0$. See {\em e.g.}  \cite{bis89, garcia-fernandez-streets} for their role in complex geometry and in generalized geometry.
 
 \item[$k$-Gauduchon] fix $k\in \{1,\ldots,n-1\}$, a Hermitian metric $g$ is called {\it k-Gauduchon} if its fundamental form $\omega$ satisfies $dd^c\omega^{k} \wedge \omega^{n-k-1}=0$. They were first introduced in \cite{fww13} and they generalize the notion of {\it Gauduchon metric}, which corresponds precisely to $k=n-1$. The latter exists and is unique in any conformal class on a compact manifold, up to multiplication by positive constants, due to the celebrated result of Gauduchon \cite{gau77}. Among the special cases of $k$-Gauduchon metrics. we have pluriclosed (correponding to $1$-Gauduchon) and {\it astheno-K\" ahler} (defined by $dd^c\omega^{n-2}$=0, hence in particular $(n-2)$-Gauduchon).

 \item[Balanced] also known as {\em semi-K\"ahler}. A Hermitian metric $g$ is called {\it balanced} if its fundamental form $\omega$ satisfies $d\omega^{n-1}=0$, or equivalently if $\omega$ is co-closed with respect to the metric itself. See  {\em e.g.} M.L. Michelson's paper \cite{m82}.
 The balanced condition can be generalized to the notion of {\it $p$-K\"ahler} structure, by requiring the existence of a closed $(p, p)$-form `transverse' to the complex structure, see \cite{aa} for more details. Here, we only notice here that a complex manifold is $1$-K\"ahler if and only if it is K\"ahler, while it is $(n-1)$-K\"ahler if and only if it is balanced, see \cite[Proposition 1.15]{aa}.

 \item[Strongly Gauduchon] A Hermitian metric $g$ is called {\it strongly Gauduchon} if its fundamental form $\omega$ satisfies $\partial \omega^{n-1}$ is $\overline{\partial}$-exact, see \cite{pop13}.

 \item[Hermitian-symplectic] A \textit{Hermitian-symplectic structure} is a real two-form $\Omega$ such that $d\Omega = 0$ and its $(1,1)$-component $\Omega^{(1, 1)}$ is positive. This latter property is also known as {\it $\Omega$ taming $J$}, see \cite{st10} and is equivalent to $\Omega(X, JX)>0$, for any real non-zero vector field $X$. In particular, $\Omega$ is a symplectic form and $\Omega^{(1, 1)}$ yields a pluriclosed metric.
 It is known that a compact complex surface is Hermitian-symplectic if and only if it is K\"ahler; it is an open question, see \cite[Question 1.7]{st10} and \cite[page 678]{li-zhang}, whether there exist compact complex non-K\"ahler manifolds of complex dimension greater than $2$ that admits Hermitian-symplectic structure.

 \item[Locally conformally symplectic (LCS) form taming the complex structure] A real $2$-form $\Omega$ satisfying $d\Omega=\theta \wedge \Omega$ for a closed real one-form $\theta$ is \textit{taming the complex structure} if $\Omega^{(1, 1)}$ is positive, or equivalently if $\Omega(X, JX)>0$, for any real non-zero vector field $X$.

\end{description}

Notice that the intersection of the above notions contains the notion of {\it K\"ahler} metric, namely a Hermitian metric $g$ such that its fundamental form $\omega$ satisfies $d\omega=0$.

We will also consider {\em holomorphic symplectic} structures, namely symplectic forms that are holomorphic $2$-forms with respect to the complex structure.

\medskip

\noindent{\bf Conventions.} Throughout the paper we shall use the conventions from \cite[(2.1)]{bes87} for the complex structure $J$ acting on complex forms on a complex manifold $(M, J)$. Namely:
\begin{itemize}
\item $J\alpha=\sqrt{-1}^{q-p} \alpha$, for any $\alpha \in \Lambda^{p, q}_{\mathbb{C}}M$, or equivalently $$J\eta(X_1, \ldots, X_p) = (-1)^p\eta(JX_1, \ldots, JX_p);$$
\item the fundamental form of a Hermitian metric is given by $\omega (X, Y): = g(JX, Y)$;
\item the operator $d^c$ is defined as $d^c := -J^{-1}dJ$, where $J^{-1} = (-1)^{\deg \alpha} J$. 
\end{itemize}

\section{Vaisman manifolds and special Hermitian metrics}\label{sec:metrics}

Let $(M,J)$ be a compact complex manifold endowed with a Vaisman metric $g$ with associate $(1,1)$-form $\omega_0$ and Lee form $\theta$.
Vaisman geometry is strongly related to K\" ahler geometry via the canonical foliation $\mathcal{F}$ defined by $\theta^{\sharp}$, the metric dual of the Lee form, and $J\theta^{\sharp}$. Assuming that $\|\theta\|=1$, which we can always achieve by possibly rescaling the metric since $\|\theta\|$ is constant \cite{vai82}, let us recall that the Vaisman metric $\omega_0$ satisfies (see {\it e.g.} \cite[Proposition 8.3]{ov-book})
\begin{equation}\label{Vaismaneq}
\omega_0=\theta \wedge J\theta - dJ\theta.
\end{equation}
The $(1, 1)$-form $\eta:=-dJ\theta$ is an exact semi-positive form, with kernel precisely $T\mathcal{F}$ and defining a transverse K\" ahler metric with respect to $\mathcal{F}$, see \cite[Theorem 3.1]{vai82}. This $(1, 1)$-form turns out to be the source of incompatibility between Vaisman metrics and several special non-K\" ahler structures.

\begin{theorem}\label{thm:vaisman-pp}
Let $M$ be a compact Vaisman manifold of complex dimension $n$. Then:
\begin{enumerate}[(i)]
    \item $M$ cannot admit a positive $(1, 1)$-form $\omega$ satisfying $dd^c \omega^k = 0$ for some $1 \le k \le n-2$. In particular, when $n\geq3$, it cannot admit pluriclosed, Hermitian-symplectic or astheno-K\"ahler metrics.
    \item $M$ cannot admit positive closed $(p, p)$-forms, for any $1 \leq p \leq n-1$. In particular, it cannot admit balanced metrics; when $n \geq 4$, it cannot admit holomorphic symplectic structures.
    \item $M$ cannot admit strongly Gauduchon metrics. 
\end{enumerate}
\end{theorem}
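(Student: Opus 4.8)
The plan is to prove all three statements by one mechanism: produce a top-degree form on $M$ that is exact (or $dd^c$-exact), hence integrates to zero by Stokes, yet is pointwise strictly positive — a contradiction. The engine is the exact, semi-positive transverse K\"ahler form $\eta=-dJ\theta$ appearing in \eqref{Vaismaneq}: since it has constant rank $2(n-1)$ with kernel exactly $T\mathcal{F}$, one has $\eta^{j}\neq0$ for every $j\le n-1$, and, crucially, $\eta^{j}$ pairs strictly positively with any positive form that supplies the two missing ``Lee'' directions $\theta,J\theta$. First I would record the bracket identities used throughout: $d\theta=0$ and $dJ\theta=-\eta$ by hypothesis, and, with the convention $d^c=-J^{-1}dJ$, also $d^c\theta=\eta$, $d^cJ\theta=0$, together with $d\eta=d^c\eta=0$ (the latter because $\eta=d^c\theta$ and $(d^c)^2=0$, while $\eta$ is closed).

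For (ii), let $\Omega$ be a closed positive $(p,p)$-form with $1\le p\le n-1$. Because $\eta$ is closed and exact, $\eta^{n-p}=d\bigl(-J\theta\wedge\eta^{n-p-1}\bigr)$ is exact, so $\Omega\wedge\eta^{n-p}$ is exact and $\int_M\Omega\wedge\eta^{n-p}=0$; but $\eta^{n-p}$ is a nonzero strongly positive $(n-p,n-p)$-form, so positivity (transversality) of $\Omega$ forces $\Omega\wedge\eta^{n-p}>0$ pointwise, a contradiction. Balanced metrics are the case $p=n-1$ with $\Omega=\omega^{n-1}$, and a holomorphic symplectic form $\sigma$ yields the nonzero closed positive $(2,2)$-form $\sigma\wedge\bar\sigma$, to which the case $p=2$ applies once $2\le n-1$, i.e. $n\ge4$. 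For (iii), let $\omega$ be strongly Gauduchon, so $\partial\omega^{n-1}=\bar\partial\Gamma$ for some $(n,n-2)$-form $\Gamma$. Integrating $\eta=-dJ\theta$ by parts gives $\int_M\omega^{n-1}\wedge\eta=\int_M d\omega^{n-1}\wedge J\theta$, which splits by bidegree into $\int_M\partial\omega^{n-1}\wedge(J\theta)^{0,1}$ plus its conjugate. In the first term I would substitute $\partial\omega^{n-1}=\bar\partial\Gamma$ and use that $\theta^{0,1}$ is $\bar\partial$-closed (from $d\theta=0$) to rewrite the integrand as $\bar\partial\bigl(\Gamma\wedge\theta^{0,1}\bigr)$ up to a constant, which integrates to zero; the conjugate term vanishes likewise. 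Hence $\int_M\omega^{n-1}\wedge\eta=0$, contradicting $\omega^{n-1}\wedge\eta>0$.

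Part (i) is the only genuinely second-order case and is the heart of the matter. Given a positive $(1,1)$-form $\omega$ with $dd^c\omega^k=0$ and $1\le k\le n-2$, I would set $\psi=\theta\wedge J\theta\wedge\eta^{n-k-2}$ (the constraint $k\le n-2$ being exactly what makes $\psi$ a genuine form). Using the identities above one computes $d^c\psi=J\theta\wedge\eta^{n-k-1}$ and then $dd^c\psi=-\eta^{n-k}$. Since $dd^c$ is formally self-adjoint for the wedge pairing on a closed manifold, $\int_M\omega^k\wedge\eta^{n-k}=-\int_M\omega^k\wedge dd^c\psi=-\int_M dd^c\omega^k\wedge\psi=0$, whereas $\omega^k\wedge\eta^{n-k}>0$ pointwise; this is the contradiction. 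Specializing to $k=1$ rules out pluriclosed and Hermitian-symplectic metrics, and $k=n-2$ rules out astheno-K\"ahler ones, once $n\ge3$.

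The main obstacle, and the place where care is needed, is the pointwise positivity bookkeeping: verifying that $\omega^k\wedge\eta^{n-k}>0$, $\Omega\wedge\eta^{n-p}>0$ and $\omega^{n-1}\wedge\eta>0$ everywhere. This rests on $\eta$ having constant rank $2(n-1)$ with kernel $T\mathcal{F}$, so that $\eta^{j}$ is nonzero for $j\le n-1$ and degenerate only along the Lee directions, and on choosing the correct notion of positivity of $(p,p)$-forms so that the pairing of a positive $(p,p)$-form with a nonzero strongly positive $(n-p,n-p)$-form is strict — at a point one diagonalizes $\eta$ transversally and checks that the surviving volume coefficient involves precisely the foliation component $\zeta^{0}\wedge\bar\zeta^{0}$, supplied by $\omega$ (or $\Omega$). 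A secondary point to get right is the sign and bidegree conventions for $d^c$, which enter the computation $dd^c\psi=-\eta^{n-k}$ and the formal self-adjointness $\int_M\alpha\wedge dd^c\beta=\int_M dd^c\alpha\wedge\beta$ (for $\deg\alpha+\deg\beta=2n-2$), the latter following from $\int_M d^c\mu=0$ on a closed manifold.
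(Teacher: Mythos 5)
Your proof is correct and takes essentially the same route as the paper: everything rests on the exact semi-positive form $\eta=-dJ\theta$, Stokes' theorem, and pointwise positivity, and your identity $dd^c\psi=-\eta^{n-k}$ for $\psi=\theta\wedge J\theta\wedge\eta^{n-k-2}$ is the paper's computation $dd^c\omega_0^{n-k-1}=-(n-k-1)(-dJ\theta)^{n-k}$ in disguise, since $\omega_0^{n-k-1}=\eta^{n-k-1}+(n-k-1)\psi$ and $\eta^{n-k-1}$ is $dd^c$-closed. The only cosmetic differences are that in (ii) you rule out holomorphic symplectic structures by pairing the $(2,2)$-form $\sigma\wedge\bar\sigma$ with $\eta^{n-2}$ rather than the paper's $(n-2,n-2)$-form with $\eta^{2}$, and in (iii) you run the same integration by parts starting from $\int_M\omega^{n-1}\wedge\eta$ instead of from $\int_M\sqrt{-1}\,\overline{\partial}\theta^{0,1}\wedge\alpha$.
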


\begin{proof}
Let $\omega_0$ be a Vaisman metric such that its corresponding Lee form $\theta$ satisfies $||\theta||_{\omega_0}=1$. Then using \eqref{Vaismaneq} and the fact that $dJ\theta$ is a $(1, 1)$-form, hence $J$-invariant, we have the following equalities for any $1 \le k \le n-2$:
\begin{align}\label{thm:vaisman-pp-eq1}
dd^c \omega_0^{n-k-1} & = dJd\omega_0^{n-k-1} =(n-k-1) dJ(\theta \wedge \omega_0^{n-k-1}) \\
& = (-1)^{n-k-1}(n-k-1) d(J\theta \wedge (dJ\theta)^{n-k-1}) \nonumber\\
& = -(n-k-1) (-dJ\theta)^{n-k} \leq 0 . \nonumber
\end{align}

\begin{enumerate}[(i)]
\item Assume there exists a positive $(1, 1)$-form $\omega$ with $dd^c \omega^k = 0$, for some $1 \le k \le n-2$. Then, since $\omega^k$ is a positive $(k, k)$-form,
\[
\int_M dd^c \omega_0^{n-k-1} \wedge \omega^k \stackrel{\eqref{thm:vaisman-pp-eq1}}{=} -(n-k-1) \int_M (-dJ\theta)^{n-k} \wedge \omega^k < 0.
\]
However, by Stokes' theorem,
\[
\int_M dd^c \omega_0^{n-k-1} \wedge \omega^k = \int_M \omega_0^{n-k-1} \wedge dd^c \omega^k = 0,
\]
a contradiction.
 \item Assume there exists a positive closed $(p, p)$-form $\omega$ on $M$, for some $1 \leq p \leq n-1$. Then
\[
\int_M (-dJ\theta)^{n-p} \wedge \omega > 0.
\]
On the other hand, by Stokes' theorem once again,
\[
\int_M (dJ\theta)^{n-p} \wedge \omega = \int_M (J\theta \wedge (dJ\theta)^{n-p-1}) \wedge d\omega = 0,
\]
which cannot be. Notice that, when $n\geq4$, this argument proves in particular that a compact Vaisman manifold cannot admit a holomorphic symplectic structure. Indeed, if $\Omega$ were holomorphic symplectic, then $n$ would be even and $\Omega^{\frac{n-2}{2}} \wedge \overline{\Omega}^{\frac{n-2}{2}}$ would be a positive closed $(n-2, n-1)$-form, falling thus in the case discussed already. In complex dimension 2, holomorphic symplectic structures and Vaisman metrics are compatible, since this is the case of the Kodaira surface. 
    
\item Assume there exists a strongly Gauduchon metric $\Omega$. Then, by definition, there exists an $(n, n-2)$-form $\alpha$ such that $\partial \Omega^{n-1}=\overline{\partial} \alpha$. Again by Stokes' theorem we have:
\begin{equation*}\label{sG}
\int_{M} \sqrt{-1}\overline{\partial}\theta^{0, 1} \wedge \alpha = \int_{M}  \sqrt{-1}\theta^{0, 1} \wedge \overline{\partial} \alpha = \int_{M}  \sqrt{-1}\theta^{0, 1} \wedge \partial \Omega^{n-1} =  \int_{M}  \sqrt{-1}\partial \theta^{0, 1} \wedge \Omega^{n-1}.
\end{equation*}
However, $\overline{\partial}\theta^{0, 1}=0$ and $-\sqrt{-1}\partial \theta^{0, 1}=-\frac{\mathrm{1}}{2} dJ\theta$ is a semi-positive $(1, 1)$-form. Then the first term vanishes, but the last term is strictly negative, a contradiction again. 
\end{enumerate}
\end{proof}

\begin{remark}
We can extend the argument above to any smooth modification $\pi\colon \hat{M} \rightarrow M$ of a Vaisman manifold. Let $E$ be the set of codimension $\geq 2$ such that $\hat{M} \setminus \pi^{-1}(E)$ and $M \setminus E$ are biholomorphic. Then $\tau:=-\pi^{*}(dJ\theta_0)$ is a semi-positive $(1, 1)$-form on $\hat{M} \setminus \pi^{-1}(E)$ and vanishes on the exceptional set $\pi^{-1}(E)$. The same arguments presented above are valid by integrating on $\hat{M}$ instead. 
\end{remark}

\begin{remark}
An alternative way of understanding the essence of the metric incompatibilities lies in the existence of special currents. More specifically, the form $\tau:=-dJ\theta_0$ (or $\tau:=-\pi^{*}(dJ\theta_0)$ in case of a modification $\pi\colon \hat{M} \rightarrow M$ of a Vaisman manifold) gives rise to a family of currents on $\hat{M}$ that will rule out the existence of the structures discussed. Indeed, for any $1 \leq p \leq n-1$, define the positive current of bidimension $(n-p, n-p)$ on $\hat{M}$ by $T_p(\eta)=\int_{\hat{M}}\eta \wedge \tau^p$. Since $\tau$ is exact, it is straightforward then that $T_p$ is the $(n-p, n-p)$-component of a boundary current $dS$. By \cite[Theorem 1.17]{aa}, this means that there are no $p$-K\" ahler forms on $\hat{M}$ for any $1 \leq p \leq n-1$; in particular, there are no balanced metrics. Similarly, since for any $2\leq p \leq n-1$, we have $\tau^p=\frac{(-1)^{p-1}}{p-1}dd^c\pi^*\omega^{p-1}$ thanks to \eqref{thm:vaisman-pp-eq1}, then $T_p$ is a positive current of bidimension $(n-p, n-p)$, component of a boundary current $dd^cS$, therefore there are no Hermitian metrics $\omega$ with $dd^c\omega^p=0$ for any $1\leq p \leq n-2$, see \cite[Theorem 2.1]{fgv}.
\end{remark}

\begin{remark}
The fact that a Vaisman metric $\omega_0$ satisfies $dd^c\omega_0\leq 0$ as in \eqref{thm:vaisman-pp-eq1}, namely is {\em plurinegative}, has strong consequences on compact complex threefolds: the Monge-Amp\`ere volumes of Hermitian metrics in the $dd^c$-class of $\omega_0$ are uniformly positive, see \cite[Proposition 3.10]{agl}.
\end{remark}

\begin{remark} Note that the incompatibility between astheno-K\" ahler metrics and Vaisman metrics was proven also in \cite[Theorem F]{chiose-rasdeaconu}.
\end{remark}

\begin{remark}
The statement of \ref{thm:vaisman-pp} works in a more general setting, namely for compact complex manifolds $X$ satisfying the following cohomological properties:
\begin{enumerate}
    \item the kernel of the natural map
    $$H_{BC}^{1, 1}(X) \rightarrow H^2_{dR}(X;\mathbb C)$$
    admits a non-zero class $[\alpha]_{BC}$ with a semi-positive representative $\alpha$;
    \item for $2 \leq p \leq n-1$, $[\alpha^p]_{BC}$ vanishes in $H^{p,p}_{BC}(X)$. 
\end{enumerate}
\end{remark}

\section{Locally conformally symplectic forms taming the complex structure and special Hermitian metrics}\label{sec:LCS-taming}

In the opposite direction with respect to the strengthening Vaisman condition, a possible way to weaken the LCK condition is to remove the compatibility between the locally conformally symplectic and the complex structures. In this section, we study locally conformally symplectic forms that just tame the complex structure, in the sense that the $(1,1)$-component is a positive form but there are possibly also $(2,0)$ and $(0,2)$-components. These structures were systematically studied in \cite{ad16, ad18, ad22} in complex dimension 2.
Note that the analogue of the tamed-to-compatible question \cite{li-zhang, st10} in the locally conformally setting has a negative answer, see \cite{ad16, bazzoni-marrero, angella-ugarte}: in particular, any compact complex non-Kähler surface admits a locally conformally symplectic structure taming the complex structure \cite{ad16}, but some Inoue surfaces, those of type $\mathcal{S}^{+}_{t}$ with $t \notin \mathbb{R}$, do not admit any locally conformally K\"ahler structure \cite{bel00}. Little is known, however, in higher dimensions.

\subsection{Obstructions to the existence of LCS forms taming complex structure}

We present below two situations in which LCS forms that tame the complex structure cannot exist on certain compact complex manifolds of arbitrary dimension, unless they are globally conformal to a Hermitian-symplectic form. The first result is a generalization of Vaisman's theorem \cite{vai80}, and the second one is a generalization of \cite[Lemma 3.1]{ovv} on locally trivial fibrations.  

We recall that a compact complex manifold is said to {\em satisfy the $dd^c$-lemma} if any $d^c$-closed, $d$-exact $(p,q)$-form is $dd^c$-exact too.

\begin{theorem}\label{generalized-Vaisman}
Let $(M, J)$ be a compact complex manifold satisfying the $dd^c$-lemma for $(1, 1)$-forms. Any locally conformally symplectic form taming the complex structure $J$ is globally conformally Hermitian-symplectic. 
\end{theorem}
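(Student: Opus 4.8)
The plan is to show that being globally conformally Hermitian-symplectic is \emph{equivalent} to the exactness of the Lee form $\theta$, and then to prove that the Lee class $[\theta]\in H^1_{dR}(M;\mathbb{R})$ vanishes. For the equivalence, observe that $e^{-f}\Omega$ is closed if and only if $(\theta-df)\wedge\Omega=0$; since $\Omega$ tames $J$ it is non-degenerate, so (for $n\geq 2$) the Lefschetz map $\alpha\mapsto\alpha\wedge\Omega$ is injective on $1$-forms and $(\theta-df)\wedge\Omega=0$ forces $\theta=df$. As $e^{-f}\Omega$ still tames $J$, it is then Hermitian-symplectic. Thus it suffices to prove $[\theta]=0$, and I note that this is unchanged if we replace $\Omega$ by any conformal multiple $e^{-f}\Omega$, since the Lee form then changes only by the exact form $df$.

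First I would normalize $\theta$ using the hypothesis. Writing $\theta=\theta^{1,0}+\theta^{0,1}$, closedness of $\theta$ gives $\bar\partial\theta^{0,1}=0$ and $\bar\partial\theta^{1,0}=-\partial\theta^{0,1}$, from which the $(1,1)$-form $\gamma:=\sqrt{-1}\,\partial\theta^{0,1}=d(\sqrt{-1}\,\theta^{0,1})$ is seen to be real, $d$-exact, and (being $d$-closed of type $(1,1)$) $d^c$-closed. The $dd^c$-lemma for $(1,1)$-forms then produces a real function $\psi$ with $\sqrt{-1}\,\partial\theta^{0,1}=dd^c\psi=2\sqrt{-1}\,\partial\bar\partial\psi$. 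Performing the conformal change $\Omega\mapsto e^{-2\psi}\Omega$ replaces $\theta$ by $\theta-2\,d\psi$ and hence achieves $\partial\theta^{0,1}=0$; so I may assume that $\theta^{1,0}$ is a holomorphic $1$-form and that both $\theta^{1,0}$ and $\theta^{0,1}=\overline{\theta^{1,0}}$ are $d$-closed. This is the only place the $dd^c$-lemma enters.

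Next I would set $\omega:=\Omega^{(1,1)}>0$ and study the non-negative quantity
$$ I:=\int_M \sqrt{-1}\,\theta^{1,0}\wedge\theta^{0,1}\wedge\omega^{n-1}=\tfrac12\int_M \theta\wedge J\theta\wedge\omega^{n-1}\ \geq\ 0, $$
which vanishes precisely when $\theta\equiv 0$. The point is to compute $I$ from the structure equation. Its $(2,1)$-component reads $\theta^{1,0}\wedge\omega=\partial\omega+\bar\partial\beta-\theta^{0,1}\wedge\beta$, where $\beta:=\Omega^{(2,0)}$; substituting this into $I$ and using $\theta^{0,1}\wedge\theta^{0,1}=0$ splits $I$ into a ``$\partial\omega$-part'' and a ``$\beta$-part''. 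The $\partial\omega$-part equals $\tfrac{1}{n-1}\int_M\theta^{0,1}\wedge\partial\omega^{n-1}=\tfrac{1}{n-1}\int_M\partial\theta^{0,1}\wedge\omega^{n-1}$ after Stokes, and vanishes by the normalization $\partial\theta^{0,1}=0$. This is exactly the computation that settles the locally conformally K\"ahler case (where $\beta=0$) and shows $\theta\equiv 0$ there.

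The main obstacle, and the genuinely new feature compared with the LCK situation, is the $\beta$-part $\int_M\theta^{0,1}\wedge\bar\partial\beta\wedge\omega^{n-2}$ coming from the $(2,0)$-component of $\Omega$. My plan to dispatch it is to integrate by parts (using $\bar\partial\theta^{0,1}=0$) and then feed in the remaining components of $d\Omega=\theta\wedge\Omega$ — the $(3,0)$-component $\partial\beta=\theta^{1,0}\wedge\beta$ and the $(1,2)$-component expressing $\bar\partial\omega$ — so that the resulting boundary and curvature terms cancel, the holomorphicity of $\theta^{1,0}$ and the exactness built into the structure equation combining to force the term to vanish. Granting this, $I=0$, hence $\theta\equiv 0$ for the normalized representative, so $[\theta]=0$ and $\Omega$ is globally conformally Hermitian-symplectic. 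The careful bookkeeping of these $\beta$-terms is where I expect the real difficulty to lie.
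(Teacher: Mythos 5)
Your opening reduction (globally conformally Hermitian-symplectic $\Leftrightarrow$ $\theta$ exact, via injectivity of wedging with the non-degenerate form $\Omega$ on $1$-forms) and your normalization via the $dd^c$-lemma (arranging $\partial\theta^{(0,1)}=0$, which is the same as the condition $dJ\theta=0$, after a conformal change) are both correct and coincide with the first half of the paper's proof. The genuine gap is exactly where you say you expect ``the real difficulty to lie'': the $\beta$-part $\int_M\theta^{(0,1)}\wedge\overline{\partial}\beta\wedge\omega^{n-2}$ is never shown to vanish, and this term is precisely what separates the theorem from the classical LCK (Vaisman-type) computation. Moreover, the plan you sketch for it does not close after the moves you describe: integrating by parts with $\overline{\partial}\theta^{(0,1)}=0$ turns this term into $-(n-2)\int_M\theta^{(0,1)}\wedge\beta\wedge\overline{\partial}\omega\wedge\omega^{n-3}$, and substituting the $(1,2)$-component $\overline{\partial}\omega=\theta^{(0,1)}\wedge\omega+\theta^{(1,0)}\wedge\overline{\beta}-\partial\overline{\beta}$ produces a term $\pm\int_M\theta^{(1,0)}\wedge\theta^{(0,1)}\wedge\beta\wedge\overline{\beta}\wedge\omega^{n-3}$ (a new quantity of the same nature as $I$, whose sign must now be controlled) together with a term in $\partial\overline{\beta}$ that requires yet another integration by parts, which in turn reintroduces $\partial\omega$, and so on. Even if this recursion eventually closes up, nothing in your proposal controls the signs and coefficients of the terms it generates; as written, the argument is a correct reduction plus an unproven claim at the crux.

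The paper circumvents all of this bookkeeping by a different choice of test form: it wedges against powers of the full form $\Omega$ rather than of its $(1,1)$-part $\omega$. Since $d\Omega^{n-1}=(n-1)\,\theta\wedge\Omega^{n-1}$ holds exactly, Stokes' theorem together with $dJ\theta=0$ gives
\begin{equation*}
0=\int_M J\theta\wedge d\Omega^{n-1}=-2(n-1)\int_M\sqrt{-1}\,\theta^{(1,0)}\wedge\overline{\theta^{(1,0)}}\wedge\Omega^{n-1},
\end{equation*}
and expanding the $(n-1,n-1)$-component of $\Omega^{n-1}$ as a combination with positive coefficients of the forms $(\Omega^{(2,0)}\wedge\Omega^{(0,2)})^{k}\wedge(\Omega^{(1,1)})^{n-2k-1}$ exhibits the right-hand side as a sum of the terms
\begin{equation*}
\int_M\sqrt{-1}\,\left(\theta^{(1,0)}\wedge(\Omega^{(2,0)})^{k}\right)\wedge\overline{\left(\theta^{(1,0)}\wedge(\Omega^{(2,0)})^{k}\right)}\wedge(\Omega^{(1,1)})^{n-2k-1},
\end{equation*}
each of which is non-negative; hence each vanishes, and the $k=0$ term is exactly your integral $I$, forcing $\theta^{(1,0)}=0$ and so $\theta=0$. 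In other words, your $I$ and all the higher terms your recursion would generate are packaged into a single identity from the start, with manifestly consistent signs. The minimal repair of your proof is simply to define $I$ using $\Omega^{n-1}$ in place of $\omega^{n-1}$: then the structure equation applies to the whole power, Stokes does the work in one stroke, and no $\beta$-terms need to be chased at all.
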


\begin{proof}
Let $\Omega$ be the LCS form taming $J$ and $\theta=\theta^{(1,0)}+\theta^{(0,1)}$ its Lee form.
We claim that, under the assumption of the $dd^c$-lemma on $(1,1)$-forms, we can assume $dJ\theta=0$, up to global conformal changes. Indeed, since $\theta$ is closed, $dJ\theta=\sqrt{-1}\overline\partial\theta^{(1,0)}-\sqrt{-1}\partial\theta^{(0,1)}$ is an exact $(1, 1)$-form. In particular, it is clearly $d^c$-closed. Therefore, as the manifold satisfies the $dd^c$-lemma on $(1,1)$-forms, there exists a smooth function $f$ such that $dJ\theta=dJdf$.  We may change now $\Omega$ conformally by $e^{-f}\Omega$. This is still an LCS form taming $J$ with Lee form $\theta-df$ satisfying $dJ(\theta-df)=0$.

Just as in Vaisman's theorem, the trick henceforth is to calculate in two different ways the integral
\[
I := \int_M J\theta \wedge d \Omega^{n-1},
\]
where $n$ is the complex dimension of $M$.
On one hand, by Stokes' Theorem,
$$I = 0 .$$
On the other hand, 
\[
    I = \int_M J\theta \wedge d \Omega^{n-1} = (n-1) \int_M J\theta \wedge \theta \wedge \Omega^{n-1} = -2(n-1) \int_M \sqrt{-1} \theta^{(1, 0)} \wedge \overline{\theta^{(1, 0)}} \wedge \Omega^{n-1}.
\]
However, in the above only the $(n-1, n-1)$-component of $\Omega^{n-1}$ is relevant to the integration and this is 
\[
\left(\Omega^{n-1} \right)^{(n-1, n-1)} = \sum_{k=0}^{\lfloor \frac{n-1}{2} \rfloor} \binom{n-1}{k} (\Omega^{(2, 0)} \wedge \Omega^{(0, 2)})^k \wedge (\Omega^{(1, 1)})^{n - 2k - 1}.
\]
Therefore,
\begin{eqnarray*}
I
&=&- 2(n-1) \sum\limits_{k=0}^{\lfloor \frac{n-1}{2} \rfloor} \binom{n-1}{k} \int_M \sqrt{-1} \theta^{(1, 0)} \wedge \overline{\theta^{(1, 0)}} \wedge (\Omega^{(2, 0)} \wedge \Omega^{(0, 2)})^k \wedge (\Omega^{(1, 1)})^{n - 2k - 1} \\
&=&- 2(n-1) \sum_{k=0}^{\lfloor \frac{n-1}{2} \rfloor} \binom{n-1}{k} \int_M \sqrt{-1} \left( \theta^{(1, 0)} \wedge (\Omega^{(2,0)})^k \right) \wedge \overline{\left( \theta^{(1, 0)} \wedge (\Omega^{(2,0)})^k \right)} \wedge (\Omega^{(1, 1)})^{n - 2k - 1} ,
\end{eqnarray*}
where all the terms are negative because $\sqrt{-1} \left( \theta^{(1, 0)} \wedge (\Omega^{(2,0)})^k \right) \wedge \overline{\left( \theta^{(1, 0)} \wedge (\Omega^{(2,0)})^k \right)} \wedge (\Omega^{(1, 1)})^{n - 2k - 1}$ is positive for any $k \geq 0$.
Since $I=0$, all addends in the sum have to vanish, in particular the one corresponding to $k=0$ is $\|\theta^{(1,0)}\|^2_{\Omega^{(1, 1)}}=0$, then $\theta=0$. Hence $\Omega$ is Hermitian-symplectic. Therefore, any locally conformally symplectic form taming $J$ is therefore globally conformally Hermitian-symplectic. 
\end{proof}

\begin{remark}
In particular, \ref{generalized-Vaisman} implies that a non-K\" ahler Moishezon manifold does not admit strict LCS forms that tame the complex structure. Indeed, we recall that in \cite[Corollary 2.3]{pet} it is proven that a non-K\" ahler Moishezon manifold carries an exact, positive $(n-1, n-1)$-current, which means it cannot support Hermitian-symplectic structures either. 
\end{remark}

\begin{corollary}
A compact complex manifold $X$ admitting an LCS form taming the complex structure, which is not globally conformally Hermitian-symplectic satisfies the following cohomological conditions:
$$H^{1, 1}_{BC}(X) \neq 0,
\qquad
2h^{0, 1}_{\overline{\partial}}(X)>b_1(X) .$$
\end{corollary}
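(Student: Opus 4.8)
The plan is to derive each cohomological inequality by contraposition, showing that if it fails then every LCS form taming $J$ must be globally conformally Hermitian-symplectic, which is exactly the conclusion of \ref{generalized-Vaisman}. So I would set up the argument as: assume $X$ carries an LCS form $\Omega$ taming $J$ that is \emph{not} globally conformally Hermitian-symplectic, and prove the two stated conditions must hold.

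First I would treat $H^{1,1}_{BC}(X)\neq 0$. The key observation is that the class of $dJ\theta$ in Bott--Chern cohomology is the obstruction to the conformal normalization used in the proof of \ref{generalized-Vaisman}. Recall from that proof that $dJ\theta$ is an exact, $d^c$-closed $(1,1)$-form, hence it determines a class $[dJ\theta]_{BC}\in H^{1,1}_{BC}(X)$. If $H^{1,1}_{BC}(X)=0$, then this class is zero, meaning $dJ\theta = dJdf$ for some smooth $f$ (this is precisely the $dd^c$-lemma conclusion extracted at the Bott--Chern level). The same conformal change $\Omega \mapsto e^{-f}\Omega$ used in \ref{generalized-Vaisman} then produces a Lee form $\theta-df$ with $dJ(\theta-df)=0$, and the two-way computation of $I=\int_M J\theta\wedge d\Omega^{n-1}$ goes through verbatim to force $\theta=0$, i.e.\ $\Omega$ is globally conformally Hermitian-symplectic. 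This contradicts our standing assumption, so $H^{1,1}_{BC}(X)\neq 0$.

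Next I would establish $2h^{0,1}_{\overline\partial}(X)>b_1(X)$. Here the idea is that a \emph{strict} LCS form taming $J$ forces the Lee form to be non-exact (otherwise $\Omega$ is globally conformally symplectic, and after normalization Hermitian-symplectic), and moreover its $(0,1)$-component must carry genuine Dolbeault cohomological content beyond what de Rham alone sees. Concretely, $\theta^{(0,1)}$ is $\overline\partial$-closed (since $d\theta=0$ gives $\overline\partial\theta^{(0,1)}=0$), so it defines a class $[\theta^{(0,1)}]\in H^{0,1}_{\overline\partial}(X)$. The natural map sending a closed real $1$-form to the Dolbeault class of its $(0,1)$-part, $H^1_{dR}(X;\mathbb{R})\to H^{0,1}_{\overline\partial}(X)$, together with its conjugate, relates $b_1$ to $2h^{0,1}_{\overline\partial}$. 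The strategy is to show that non-existence of a strict taming form is equivalent to this map being \emph{injective} (equivalently $b_1(X)\ge 2h^{0,1}_{\overline\partial}(X)$ forcing, via the Frölicher-type inequality $b_1\le 2h^{0,1}_{\overline\partial}$, the equality $b_1=2h^{0,1}$, which is the $\partial\overline\partial$-type condition on $1$-forms that lets the normalization succeed). I expect the cleanest route is: the Frölicher inequality always gives $b_1\le 2h^{0,1}_{\overline\partial}$; equality holds exactly when every $\overline\partial$-closed $(0,1)$-form's class comes from a $d$-closed $1$-form and no nontrivial Lee obstruction survives. Under equality one can again normalize $\theta$ away, contradicting strictness; hence strictness forces the strict inequality $2h^{0,1}_{\overline\partial}(X)>b_1(X)$.

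The main obstacle will be the second inequality: making precise the passage from ``equality in the Frölicher inequality $b_1=2h^{0,1}_{\overline\partial}$'' to ``every taming LCS form is globally conformally Hermitian-symplectic.'' The delicate point is that $b_1=2h^{0,1}_{\overline\partial}$ is equivalent to the validity of the $dd^c$-lemma restricted to $1$-forms (a standard consequence of the Bott--Chern/Frölicher spectral sequence degenerating in the relevant range), and one must verify that this restricted lemma is exactly the hypothesis needed to rerun the normalization argument of \ref{generalized-Vaisman}. Once that equivalence is pinned down, both conditions follow formally by contraposition; the genuinely new content is packaging the cohomological hypothesis of \ref{generalized-Vaisman} in terms of the numerical invariants $H^{1,1}_{BC}$ and the Hodge--Frölicher numbers, rather than any fresh geometric computation.
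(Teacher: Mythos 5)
Your proof of the first condition coincides with the paper's: both come down to the observation that $[dJ\theta]_{BC}\neq 0$ in $H^{1,1}_{BC}(X)$, since if this class vanished, the conformal normalization and the integral computation from \ref{generalized-Vaisman} would force $\Omega$ to be globally conformally Hermitian-symplectic. For the second inequality you take a contrapositive route: if $2h^{0,1}_{\overline\partial}(X)\le b_1(X)$, then combined with the general inequality $b_1\le 2h^{0,1}_{\overline\partial}$ one gets $b_1=2h^{0,1}_{\overline\partial}$, hence (by Gauduchon's equivalence, \cite[I16]{gau84}) the $dd^c$-lemma for $(1,1)$-forms, hence by \ref{generalized-Vaisman} a contradiction with strictness. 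The paper argues directly instead: it writes down the short exact sequence $0\to H^1(X,\mathbb{R})\xrightarrow{i} H^1(X,\mathcal{O})\xrightarrow{f}\mathrm{Ker}\, h\to 0$, where $h\colon H^{1,1}_{BC}(X,\mathbb{R})\to H^{1,1}(X,\mathbb{R})$ is induced by the identity, and notes that $0\neq[dJ\theta]\in\mathrm{Ker}\, h$, whence $2h^{0,1}_{\overline\partial}=b_1+\dim\mathrm{Ker}\, h>b_1$. So the paper reuses the class $[dJ\theta]$ from the first part and exhibits the defect $2h^{0,1}_{\overline\partial}-b_1$ explicitly, while you invoke the cited equivalence as a black box; both are legitimate, and indeed the paper itself frames the inequality as ``a consequence of'' that equivalence before giving the exact-sequence argument.

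Two inaccuracies you should repair, neither fatal. First, the inequality $b_1\le 2h^{0,1}_{\overline\partial}$ is \emph{not} the Fr\"olicher inequality: Fr\"olicher gives $b_1\le h^{1,0}+h^{0,1}$, and $h^{1,0}$ can exceed $h^{0,1}$ (Iwasawa manifold: $h^{1,0}=3$, $h^{0,1}=2$), so it does not yield your bound. What you need is the injectivity of the map $H^1_{dR}(X;\mathbb{R})\to H^{0,1}_{\overline\partial}(X)$, $[\alpha]\mapsto[\alpha^{(0,1)}]$, which holds on compact complex manifolds because pluriharmonic functions are constant; this is precisely the injectivity of the map $i$ in the paper's exact sequence. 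Second, your parenthetical asserting that injectivity of this map is ``equivalently $b_1\ge 2h^{0,1}_{\overline\partial}$'' is backwards: injectivity gives $b_1\le 2h^{0,1}_{\overline\partial}$, and the condition that lets the normalization succeed is surjectivity, i.e.\ $b_1=2h^{0,1}_{\overline\partial}$. The logical skeleton you then state (equality $\Rightarrow$ $dd^c$-lemma for $(1,1)$-forms $\Rightarrow$ every taming LCS form is globally conformally Hermitian-symplectic) is the correct one, so these are wording repairs rather than structural gaps.
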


\begin{proof}
The first condition follows by noticing that $[dJ\theta] \in H^{1, 1}_{BC}(X)$ and that, by the proof of \ref{generalized-Vaisman}, $[dJ\theta]\neq 0$. The second inequality is a consequence of the $dd^c$-lemma on $(1, 1)$-forms being equivalent to $b_1(X)=2h_{\overline{\partial}}^{0,1}(X)$ (see \cite[I16]{gau84}). Indeed, denote by $h\colon H^{1, 1}_{BC}(X, \mathbb{R}) \rightarrow H^{1, 1}(X, \mathbb{R})$ the map induced by the identity. Then we have a short exact sequence given by
\begin{equation*}
    0 \rightarrow H^{1}(X, \mathbb{R}) \xrightarrow{i} H^{1}(X, \mathcal{O}) \xrightarrow{f} \mathrm{Ker}\, h \rightarrow 0,
\end{equation*}
where $i([\alpha])=[\alpha^{0, 1}]$ and $f([\alpha])=[\partial{\alpha}+\overline{\partial \alpha}]$. Since $0 \neq [dJ\theta] \in \mathrm{Ker}\, h$, we have then $2h^{0, 1}_{\overline{\partial}}(X)>b_1(X)$.
\end{proof}

\begin{proposition}\label{prop:LCS-taming-fibrations}
Let $\pi\colon M \to B$ be a locally trivial fibration between complex manifolds such that $B$ is path connected, $\Omega$ is an LCS form on $M$ taming the complex structure and the fibers are positive dimensional complex submanifolds of $M$. If $\pi^*\colon H^1(B) \to H^1(M)$ is an isomorphism, then $M$ is globally conformal to a Hermitian-symplectic metric.
\end{proposition}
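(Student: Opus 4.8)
The plan is to prove that the Lee form $\theta$ of $\Omega$ is exact; granting this, writing $\theta=df$ and replacing $\Omega$ by $e^{-f}\Omega$ yields a closed form whose $(1,1)$-part remains positive, i.e.\ a Hermitian-symplectic form, so that $M$ is globally conformal to a Hermitian-symplectic metric. Since $[\theta]\in H^{1}(M)$ and $\pi^{*}\colon H^{1}(B)\to H^{1}(M)$ is onto, I would first fix a closed $1$-form $\beta$ on $B$ with $[\theta]=\pi^{*}[\beta]$. Comparing periods, the period homomorphism $\rho_{\theta}\colon\pi_{1}(M)\to\mathbb R$, $[\gamma]\mapsto\int_{\gamma}\theta$, then factors as $\rho_{\theta}=\rho_{\beta}\circ\pi_{*}$, where $\rho_{\beta}$ is the period homomorphism of $\beta$ and $\pi_{*}\colon\pi_{1}(M)\to\pi_{1}(B)$ is surjective because the fibers are connected. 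Assuming, for contradiction, that $\theta$ is \emph{not} exact, we have $\rho_{\theta}\not\equiv 0$ and hence $\rho_{\beta}\not\equiv 0$.

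Next I would pass to the minimal covers trivializing these periods. Let $p\colon\hat B\to B$ be the regular cover with $\pi_{1}(\hat B)=\ker\rho_{\beta}$ and set $\hat M:=M\times_{B}\hat B$, which is exactly the regular cover $\hat\pi\colon\hat M\to M$ associated with $\ker\rho_{\theta}=\pi_{*}^{-1}(\ker\rho_{\beta})$; its deck group $\Gamma\cong\mathrm{im}\,\rho_{\beta}\subset(\mathbb R,+)$ acts through the $\hat B$-factor, and I write $\rho(\gamma)\in\mathbb R$ for the translation corresponding to $\gamma\in\Gamma$. By construction $\hat M\to\hat B$ is again a locally trivial fibration, over the \emph{connected} base $\hat B$, with the same compact complex fibers $F$, and the deck transformations permute these fibers. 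On $\hat M$ the pulled-back Lee form is exact, $\hat\pi^{*}\theta=d\phi$ with $\gamma^{*}\phi=\phi+\rho(\gamma)$, so that
\[
\hat\Omega:=e^{-\phi}\,\hat\pi^{*}\Omega
\]
is \emph{closed}, still tames $J$ (it differs from $\hat\pi^{*}\Omega$ by a positive factor), and obeys the homothety rule $\gamma^{*}\hat\Omega=e^{-\rho(\gamma)}\hat\Omega$.

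Finally I would run the volume/homothety argument. Let $m\ge 1$ be the complex dimension of the fiber. On any fiber $\hat F\cong F$ the restriction $\hat\Omega|_{\hat F}$ is a symplectic form taming $J|_{\hat F}$, so its symplectic volume $v:=\int_{\hat F}\hat\Omega^{m}$ is strictly positive: expanding $(\hat\Omega)^{m}$ by bidegree into terms proportional to $(\hat\Omega^{(2,0)})^{k}\wedge\overline{(\hat\Omega^{(2,0)})^{k}}\wedge(\hat\Omega^{(1,1)})^{m-2k}$, each is a positive $(m,m)$-form exactly as in the proof of \ref{generalized-Vaisman}. Since $\hat\Omega$ is closed, $v=\langle[\hat\Omega]^{m},[\hat F]\rangle$ depends only on the class $[\hat F]\in H_{2m}(\hat M)$; as all fibers lie over the connected base $\hat B$ they are mutually homologous, so $v$ is the same for every fiber. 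On the other hand, for $\gamma\in\Gamma$ sending $\hat F$ to the fiber $\gamma(\hat F)$,
\[
\int_{\gamma(\hat F)}\hat\Omega^{m}=\int_{\hat F}(\gamma^{*}\hat\Omega)^{m}=e^{-m\rho(\gamma)}\,v .
\]
Comparing the two evaluations forces $e^{-m\rho(\gamma)}=1$, i.e.\ $\rho(\gamma)=0$, for every $\gamma\in\Gamma$; hence $\rho_{\beta}\equiv 0$, contradicting our assumption. Therefore $\theta$ is exact and $M$ is globally conformal to a Hermitian-symplectic metric.

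I expect the main obstacle to be the bookkeeping of the covers: verifying that $\hat M=M\times_{B}\hat B$ is precisely the cover trivializing the periods of $\theta$, that the fibration descends to $\hat M\to\hat B$ with the \emph{same} compact fibers, and that the $\mathbb R$-action by deck transformations genuinely permutes fibers while rescaling $\hat\Omega$ by $e^{-\rho(\gamma)}$. The only analytic input—positivity of the fibrewise symplectic volume for a merely taming (not $J$-compatible) closed form—is handled exactly as in \ref{generalized-Vaisman}, and the hypothesis that the fibers be positive-dimensional is what makes the homothety factor $e^{-m\rho(\gamma)}$ nontrivial and hence the argument effective.
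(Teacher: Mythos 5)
Your proof is correct and takes essentially the same route as the paper's: pass to the minimal cover on which the Lee form becomes exact, use the $H^1$ isomorphism to obtain a compatible cover of the base over which the rescaled form is closed and taming, and compare the positive, homology-invariant symplectic volumes of fibers under the deck-group homothety character to force that character to be trivial. The only differences are cosmetic (the contradiction framing and the explicit fiber-product description of the cover); note that, just like the paper's own proof, you implicitly use compactness and connectedness of the fibers, which the statement does not spell out.
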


\begin{proof}
Let $\theta$ be the Lee form of $\Omega$ and $p_M\colon \tilde{M} \to M$ the minimal covering of $M$ on which $p_M^* \theta$ is exact. Hence there exists $g\colon \tilde{M} \to \mathbb{R}$ smooth function such that $\Omega_0 := e^{-g} p_M^* \Omega$ is Hermitian-symplectic. 

As $\pi^*$ is an isomorphism, there exists a corresponding covering $p_B\colon \tilde{B} \to B$ and $\tilde{\pi}\colon \tilde{M} \to \tilde{B}$ such that the following diagram is commutative:
\begin{figure}[H]
		\centering
		\begin{tikzpicture}
		\matrix (m) [matrix of math nodes,row sep=3em,column sep=4em,minimum width=2em]
		{
			\tilde{M} & \tilde{B} \\
			M & B \\};
		\path[-stealth]
		(m-1-1) edge node [above] {$\tilde{\pi}$} (m-1-2)
		(m-1-1) edge node [left] {$p_M$} (m-2-1)
		(m-1-2) edge node [right] {$p_B$} (m-2-2)
		(m-2-1) edge node [above] {$\pi$} (m-2-2);
		\end{tikzpicture}
	\end{figure}
Denote by $\tilde{B}_0$ the set of regular values of $\tilde{\pi}$. As $\tilde{B}_0$ is connected, all fibers $F_b = \tilde{\pi}^{-1}(b)$ with $b \in \tilde{B}_0$ have the same complex dimension, which we denote by $m$, and represent the same homology class in $H_{2m}(\tilde{M})$, thus the value $\int_{F_b} \omega_0^m$ does not depend on $b \in \tilde{B}_0$.

Moreover, we claim that this value is strictly positive. Indeed, 
\begin{equation*}
    \int_{F_b}\Omega_0^m=\int_{F_b} (\Omega_0^m)^{(m, m)}=\sum_{k \geq 0}\int_{F_b} (\Omega_0^{(2,0)})^k \wedge \overline{(\Omega_0^{(0, 2)})^k} \wedge (\Omega_0^{(1, 1)})^{m-2k} \geq 0 ,
\end{equation*}
since $\Omega_0^{(1,1)}$ is positive.
Note that for $k=0$, we regain precisely the volume of $F_b$ with respect to the metric $\Omega_0^{(1, 1)}$, therefore $\int_{F_b} \Omega_0^{m}>0$. 

On the other hand, for any $\gamma \in \mathrm{Deck}(\tilde{M}/M)$, one can easily check that $\gamma$ permutes the fibers of $\tilde{\pi}$. Denote by $c\colon \mathrm{Deck}(\tilde{M}/M) \to \mathbb R^{>0}$ the character such that $\gamma^*\Omega_0=c(\gamma)\cdot\Omega_0$. Then we have
\[
\mathrm{Vol}_{\omega_0}(F_b)=\int_{F_b} \Omega_0^m = \int_{{F_{\tilde{\pi}^{-1}(\gamma(b))}}} \gamma^* (\Omega_0)^m = \int_{{F_{\tilde{\pi}^{-1}(\gamma(b))}}}{{c_\gamma^m}} \cdot \Omega_0^m = {{c_\gamma^m}} \int_{F_b} \Omega_0^m,
\]
so $c_\gamma = 1$ for all $\gamma \in \mathrm{Deck}(\tilde{M}/M)$ {\it i.e.} $\Omega$ is globally conformally Hermitian-symplectic.
\end{proof}

\subsection{Examples of LCS forms taming the complex structure}

The following results give ways of constructing new examples of manifolds admitting LCS forms taming $J$ out of old ones. One method is by blowing up special submanifolds and generalizes \cite[Theorem 2.8]{ovv}, and the other one is by analytic deformation.
Compare \cite[Proposition 3.1, Proposition 2.4]{yang} for the corresponding statement in the Hermitian-symplectic case.

\begin{proposition}\label{prop:LCS-taming-blowup}
Let $(M, J)$ be a complex manifold admitting a locally conformally symplectic structure $\Omega$ taming $J$ with Lee form $\theta$. The blow-up along a compact complex submanifold $N \subset M$ for which $\Omega_{|N}$ is globally conformally Hermitian-symplectic admits a locally conformally symplectic structure taming the induced complex structure.
\end{proposition}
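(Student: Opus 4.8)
\medskip

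The plan is to produce an LCS form taming the induced complex structure on the blow-up $\hat M = \mathrm{Bl}_N M$ by gluing the pullback of a conformal rescaling of $\Omega$, which already tames $J$ away from the exceptional divisor, with a locally supported closed $(1,1)$-form coming from the relative structure of the projectivized normal bundle, which provides positivity in the fiber directions that collapse under the blow-down $\pi\colon \hat M \to M$. First I would reduce to the Hermitian-symplectic situation along $N$: by hypothesis $\Omega_{|N}$ is globally conformally Hermitian-symplectic, so after a \emph{global} conformal change $\Omega \mapsto e^{-f}\Omega$ (which preserves the LCS-taming property and only modifies the Lee form by an exact term $\theta \mapsto \theta - df$) I may assume that $\theta_{|N} = 0$, equivalently that $\Omega_{|N}$ is genuinely closed on $N$. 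This normalization is what lets the fiberwise correction term be taken closed rather than merely LCS.

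\medskip

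Next I would carry out the standard local model near the exceptional divisor $E = \mathbb{P}(\mathcal N_{N/M})$. On a tubular neighborhood, using a Hermitian metric on the normal bundle $\mathcal N_{N/M}$, one has the tautological relatively-positive $(1,1)$-form $\rho$ on $E$ (the curvature of the relative hyperplane bundle $\mathcal O_E(1)$), which is positive on vectors tangent to the $\mathbb P^{m-1}$-fibers and whose pullback/extension $\hat\rho$ to $\hat M$ is a \emph{closed} real $(1,1)$-form supported near $E$. The candidate form is
\[
\hat\Omega := \pi^* \Omega + \varepsilon\, \hat\rho
\]
for a small constant $\varepsilon > 0$, possibly after a further local conformal adjustment. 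I would then verify the two defining conditions. For the LCS condition: since $\hat\rho$ is closed, $d\hat\Omega = \pi^*(d\Omega) + 0 = \pi^*(\theta \wedge \Omega) = (\pi^*\theta) \wedge \pi^*\Omega$; because $\hat\rho$ is supported where (after the normalization above) $\theta$ is small, one checks $d\hat\Omega = (\pi^*\theta)\wedge \hat\Omega$ holds exactly, with $\pi^*\theta$ closed — here the vanishing $\theta_{|N}=0$ is precisely what kills the unwanted cross term $(\pi^*\theta)\wedge\varepsilon\hat\rho$ on the divisor. For the taming condition: $\pi^*\Omega$ has positive $(1,1)$-component in all directions transverse to the fibers of $E$ (being the pullback of a positive form), while $\varepsilon\hat\rho$ supplies strict positivity in the collapsing fiber directions, so the combined $(1,1)$-component $\hat\Omega^{(1,1)} = \pi^*(\Omega^{(1,1)}) + \varepsilon\,\hat\rho$ is positive on a compact neighborhood of $E$ once $\varepsilon$ is chosen small enough, and agrees with the positive $\pi^*\Omega$ away from $E$.

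\medskip

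The main obstacle I anticipate is the matching of the Lee forms and the closedness bookkeeping near $E$: one must ensure the single global closed one-form $\pi^*\theta$ governs $d\hat\Omega$ simultaneously on the whole of $\hat M$, which forces the correction $\hat\rho$ to be closed and to interact trivially with $\theta$ precisely where it is supported — this is the role of the normalization $\theta_{|N}=0$ and of choosing the support of $\hat\rho$ inside a neighborhood where $\theta$ can be controlled. A secondary technical point is the uniform choice of $\varepsilon$: positivity of $\hat\Omega^{(1,1)}$ is a pointwise open condition that is easy on the compact exceptional set but must be patched with the unmodified region, so I would argue positivity on a fixed compact collar around $E$ and use that $\hat\rho \equiv 0$ outside it. Away from these local issues the argument is the standard blow-up gluing, and the LCS-taming class is genuinely preserved; the proof follows the same architecture as the Hermitian-symplectic case in \cite[Theorem 2.8]{ovv} and \cite[Proposition 3.1]{yang}, with the Lee form carried along as the only new ingredient.
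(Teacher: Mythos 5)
Your proposal follows the same architecture as the paper's proof --- conformally normalize so that the structure is honestly Hermitian-symplectic near $N$, correct by a closed $(1,1)$-form near the exceptional divisor (this is precisely the content of \cite[Proposition 3.2]{yang}, which the paper invokes), and glue with $\pi^*\Omega$ --- but your normalization step is too weak, and this breaks the key identity you claim. You only arrange $\theta_{|N}=0$, i.e.\ the vanishing of the pullback $\iota_N^*\theta$ of the Lee form to $N$. This does \emph{not} make the cross term $(\pi^*\theta)\wedge\varepsilon\hat\rho$ vanish: first, $\hat\rho$ is supported on a whole neighbourhood of $E$ in $\hat M$, where $\theta$ has no reason to vanish; second, even at points of $E$ the condition $\iota_N^*\theta=0$ only forces the restriction of $\pi^*\theta$ to $TE$ to vanish, not $\pi^*\theta$ itself as a $1$-form on $\hat M$. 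Indeed, the differential of $\pi$ at a point $e\in E$ lying over $p\in N$ has image $T_pN\oplus L_e$, where $L_e\subset \mathcal{N}_{N/M,p}$ is the normal line labelled by $e$, and $\theta_p$ can be nonzero on $L_e$ even when it kills $T_pN$. Your fallback, that $\theta$ is ``small'' on the support of $\hat\rho$, does not help: the LCS equation $d\hat\Omega=(\pi^*\theta)\wedge\hat\Omega$ is an exact identity, not an estimate, so a small nonzero error $(\pi^*\theta)\wedge\varepsilon\hat\rho$ destroys it, and $d\hat\Omega$ then need not be divisible by any closed $1$-form at all.

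The repair is exactly the paper's first step, and it is available under the stated hypothesis: since $\iota_N^*\theta$ is exact and a tubular neighbourhood $U$ of $N$ deformation retracts onto $N$, the closed form $\theta_{|U}$ is itself exact, say $\theta_{|U}=dg$; extend $g$ by a cutoff to a global smooth function $f$ agreeing with $g$ near $N$, and rescale $\Omega\mapsto e^{-f}\Omega$. The new Lee form then vanishes \emph{identically on a neighbourhood} $U'$ of $N$ (as a $1$-form, on all tangent vectors, not merely after pullback to $N$), so $\Omega_{|U'}$ is genuinely Hermitian-symplectic there. Performing your correction with $\hat\rho$ supported inside $\pi^{-1}(U')$ (equivalently, applying Yang's construction on the blow-up of $U'$ and gluing, as the paper does) makes the cross term vanish identically, and the glued form satisfies $d\tilde\Omega=(\pi^*\theta)\wedge\tilde\Omega$ globally. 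With this single strengthening of the normalization your argument goes through and coincides with the paper's proof; the positivity discussion for small $\varepsilon$ is standard and fine as you wrote it, using compactness of $N$.
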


\begin{proof}
Denote by $\pi: \tilde{M} \rightarrow M$ the blow-up of $M$ along $N$. Since $\Omega_{|N}$ is globally conformally Hermitian-symplectic, $\theta_{|N}$ is exact and choose $U$ a neighbourhood of $N$ such that $\theta_{|U}$ is still exact. Therefore, by rescaling $\Omega$, we can assume that $\theta_{|U}=0$ and hence, $\Omega_{|U}$ is a Hermitian-symplectic form. Take now $N\subset V \subset U$ and let $\pi: \tilde{U} \rightarrow U$ be the blow-up along $N$. Then by the proof of \cite[Proposition 3.2]{yang}, we get that there exists a Hermitian-symplectic form $\tilde{\Omega}$ on $\tilde{U}$ such that $\tilde{\Omega}_{|\tilde{U}\setminus \pi^{-1}(V)}=\pi^*(\Omega_{|U\setminus V})$. This means that $\tilde{\Omega}$ thus defined glues globally to $\pi^{*} \Omega$ on $\tilde{M}$ and satisfies $d\tilde{\Omega}=\pi^{*}\theta \wedge \tilde{\Omega}$, giving an LCS form that tames the induced complex structure $\tilde{J}$ on $\tilde{M}$. 
\end{proof}

\begin{proposition}\label{prop:LCS-taming-deformations}
Let $(M_t, J_t)_{t\in \mathbb{D}_\varepsilon}$ be an analytic family of compact complex manifolds such that $(M_0, J_0)$ admits a locally conformally symplectic form that tames $J_0$. Then for $\varepsilon>0$ small enough, $(M_t, J_t)$ can also be endowed with a locally conformally symplectic form taming $J_t$.
\end{proposition}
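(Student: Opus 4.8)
The plan is to exploit the fact that the locally conformally symplectic condition is a condition on the underlying \emph{differentiable} structure and does not involve the complex structure at all: only the taming condition interacts with $J_t$. Since taming is an open condition, it will be preserved under small deformations by a compactness argument, and the same form $\Omega_0$ will do the job for all nearby $t$.

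First I would invoke deformation theory to fix the underlying smooth manifold. By definition, an analytic family $(M_t, J_t)_{t\in\mathbb{D}_\varepsilon}$ is the family of fibres of a proper holomorphic submersion $\mathcal{M}\to\mathbb{D}_\varepsilon$. By Ehresmann's theorem, after possibly shrinking $\varepsilon$, there is a diffeomorphism $\mathcal{M}\cong M\times\mathbb{D}_\varepsilon$, with $M:=M_0$ the fixed underlying smooth manifold, under which $J_t$ becomes a smooth family of integrable almost complex structures on the single compact manifold $M$, depending continuously (indeed real-analytically) on $t$, with $J_t|_{t=0}=J_0$.

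Next I would keep the same form. Let $\Omega:=\Omega_0$ be the given LCS form on $M_0=M$ with Lee form $\theta$, so that $d\Omega=\theta\wedge\Omega$ and $d\theta=0$. These are identities of fixed differential forms on the fixed smooth manifold $M$, and are manifestly independent of $t$. Hence $\Omega$ is a locally conformally symplectic form, with the same Lee form $\theta$, for every value of $t$; what remains is only to check that $\Omega$ tames $J_t$ for $t$ small. For this, fix an auxiliary Riemannian metric on $M$ and let $S\subset TM$ be the associated unit tangent sphere bundle, which is compact since $M$ is. The function
\[
\Phi\colon S\times\mathbb{D}_\varepsilon \to \mathbb{R}, \qquad \Phi(x,X,t)=\Omega_x\big(X,(J_t)_x X\big),
\]
is continuous. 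By hypothesis $\Phi(\cdot,\cdot,0)>0$ on the compact set $S$, so it attains there a positive minimum $c>0$; by continuity and compactness there exists $0<\varepsilon'\le\varepsilon$ with $\Phi>\tfrac{c}{2}>0$ on $S\times\mathbb{D}_{\varepsilon'}$. Since $\Omega(X,J_tX)$ is homogeneous of degree $2$ in $X$, positivity on the unit sphere bundle yields $\Omega(X,J_tX)>0$ for every nonzero $X$, that is, $\Omega$ tames $J_t$, for all $|t|<\varepsilon'$. Thus $\Omega$ is an LCS form taming $J_t$ for $\varepsilon$ small enough.

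I do not expect a genuine obstacle: the argument is soft, resting only on the openness of the taming condition and the compactness of $M$. The one point requiring care is the bookkeeping that lets us regard $\Omega_0$ and $\theta$ as \emph{fixed} objects on a fixed smooth manifold while $J_t$ alone varies, which the Ehresmann trivialization handles cleanly; note that no conformal rescaling is even needed, as the Lee form is left untouched.
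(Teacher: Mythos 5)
Your proof is correct and follows essentially the same route as the paper's: fix the underlying smooth manifold via the Ehresmann trivialization (the paper phrases this as pulling back $\Omega_0$ and $\theta_0$ by diffeomorphisms $\phi_t\colon M_t\to M_0$, which is the same device in the opposite direction), observe that the LCS equations are purely differentiable and hence persist, and conclude by openness of the taming condition as $J_t\to J_0$. If anything, your compactness argument on the unit sphere bundle makes the uniformity of the positivity bound explicit, a point the paper's proof passes over with a pointwise limit statement.
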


\begin{proof}
Let $\phi_t\colon M_t \rightarrow M_0$ be a diffeomorphism and $\Omega_0$ the LCS form that tames $J_0$ with Lee form $\theta_0$. Take $\Omega_t:=\phi_t^*\Omega_0$ and $\theta_t:=\phi_t^*\theta_0$. Then $d\Omega_t=\theta_t \wedge \Omega_t$ and $\theta_t$ is closed. Notice that $\phi_t$ is not a biholomorphism, hence $\Omega_t$ has a decomposition $\Omega_t=\Omega_t^{(2,0)}+\Omega_t^{(1,1)}+\Omega_t^{(0,2)}$ with respect to $J_t$, that is not necessarily the pullback via $\phi_t$ of the decomposition of $\Omega_0$ with respect to $J_0$. However, since $J_t \rightarrow J_0$ as $t \rightarrow 0$, we get that $\Omega_t(\phi^{-1}_{t,*}X, J_t\phi^{-1}_{t,*}X) \rightarrow \Omega_0(X, JX)>0$, as $t \rightarrow 0$. Therefore, for $\varepsilon>0$ small enough, $\Omega_t(X_t, J_tX_t)>0$, for any non-zero vector field $X_t$ on $M_t$ and thus, $\Omega_t^{(1, 1)}>0$ and $\Omega_t$ is an LCS form taming $J_t$.
\end{proof}

The following result is a partial converse to \ref{prop:LCS-taming-blowup} and it is analogue to \cite{miyaoka} for K\"ahler metrics, respectively to \cite[Theorem 4.7]{fino-tomassini} for pluriclosed metrics.

\begin{proposition}
Let $(M,J)$ be a complex manifold of complex dimension $n \geq 2$ and $\pi\colon \tilde M\to M$ be the blow-up of $M$ at a point $p \in M$. Then $\tilde M$ admits a Hermitian-symplectic structure if and only if $M$ admits a Hermitian-symplectic structure.
\end{proposition}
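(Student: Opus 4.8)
\emph{Proof plan.} The statement is an equivalence, and I would treat the two implications separately. The implication that a Hermitian-symplectic structure on $M$ lifts to one on $\tilde M$ is essentially the special case $N=\{p\}$, $\theta=0$ of \ref{prop:LCS-taming-blowup}. Indeed, a Hermitian-symplectic form has vanishing Lee form, so its restriction to the point $p$ is (trivially) globally conformally Hermitian-symplectic, and the construction there produces an LCS form taming $\tilde J$ with Lee form $\pi^*\theta=0$, that is, a genuine Hermitian-symplectic form on $\tilde M$. Equivalently, one may invoke directly the blow-up construction of Yang \cite[Proposition 3.2]{yang}, which is the Hermitian-symplectic statement alluded to in the discussion preceding \ref{prop:LCS-taming-blowup}.

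For the converse, let $\tilde\Omega$ be a Hermitian-symplectic form on $\tilde M$ and let $E=\pi^{-1}(p)\cong\mathbb{P}^{n-1}$ be the exceptional divisor. Since $\pi$ restricts to a biholomorphism $\tilde M\setminus E\to M\setminus\{p\}$, the pushforward $\sigma:=\big((\pi|_{\tilde M\setminus E})^{-1}\big)^*\tilde\Omega$ is a closed $2$-form on $M\setminus\{p\}$ with $\sigma^{(1,1)}>0$. As nothing needs to be changed away from $p$, it suffices to replace $\sigma$ on a coordinate ball $U\ni p$ by a smooth Hermitian-symplectic form that coincides with $\sigma$ near $\partial U$: gluing this local piece with $\sigma$ on $M\setminus\{p\}$ then yields a global closed $2$-form on $M$ with positive $(1,1)$-part.

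The obstruction to extending $\sigma$ smoothly across $p$ is concentrated on $E$. For any line $\ell\subset E$ one has $\int_\ell\tilde\Omega=\int_\ell\tilde\Omega^{(1,1)}>0$, because $\ell$ is a complex curve, so the $(2,0)$- and $(0,2)$-parts of $\tilde\Omega$ restrict to $0$, while $\tilde\Omega^{(1,1)}$ restricts to a positive area form; equivalently $\iota_E^*[\tilde\Omega]=\lambda h\neq 0$ in $H^2(\mathbb{P}^{n-1})$ with $\lambda>0$. This is exactly the ``area concentrated along $E$'' that prevents $\sigma$ from descending. I would remove it by subtracting from $\tilde\Omega$ a closed $2$-form $C$ supported in a tubular neighbourhood of $E$ and built from the curvature form of the tautological bundle $\mathcal{O}_E(-1)$, i.e. the normal bundle of $E$, so that $\hat\Omega:=\tilde\Omega-C$ becomes basic near $E$ (a pullback $\pi^*\Omega_U$) and hence descends to a \emph{smooth} closed $2$-form $\Omega_U$ on $U$ that agrees with $\sigma$ outside the support of $C$. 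Concretely this reverses Yang's forward construction: on the explicit local model $\tilde U=\mathrm{Bl}_0 B$ one can use the potentials $\log\|\cdot\|^2$ attached to the tautological section to exhibit $\hat\Omega$ as a pullback.

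The main obstacle is to ensure that $\Omega_U^{(1,1)}$ remains positive near $p$ after subtracting $C$: removing the positive contribution along $E$ can a priori destroy positivity in the normal directions, which become $T_pM$ under the blow-down. I expect to resolve this by controlling the size and radial profile of the correction so that $\sigma^{(1,1)}$ still dominates on the outer part of $U$, and by exploiting the remaining freedom in the local model (rescaling the fibre coordinates, and if needed absorbing the correction near $p$ against a suitably cut-off multiple of the flat Kähler form $dd^c\|z\|^2$, whose non-positive contributions in the cut-off region are made negligible by scaling). Once positivity of $\Omega_U$ is secured on $U$, gluing $\Omega_U$ with $\sigma$ on $M\setminus\{p\}$ produces the desired Hermitian-symplectic form on $M$, completing the equivalence.
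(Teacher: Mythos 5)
Your forward direction is fine and agrees with how the paper handles it: it is the case $N=\{p\}$, $\theta=0$ of \ref{prop:LCS-taming-blowup}, or directly Yang's blow-up result \cite{yang}. The substance of the proposition, however, is the converse, and there your plan has two genuine gaps, located exactly at the steps that carry all the difficulty. First, the descent mechanism does not work as described: subtracting a closed form $C$ supported near $E$ whose class cancels $\iota_E^*[\tilde\Omega]=\lambda h$ makes $\tilde\Omega-C$ cohomologically trivial along $E$, but being basic (a pullback $\pi^*\Omega_U$) is a pointwise condition, far stronger than a cohomological one. To have $\tilde\Omega-C=\pi^*\Omega_U$ identically near $E$ you would need $C=\tilde\Omega-\pi^*\Omega_U$, which presupposes the extension $\Omega_U$ you are trying to construct; breaking this circularity requires an exact correction with an explicitly controlled potential, and the given $\tilde\Omega$ has no a priori normal form near $E$ (reversing Yang's construction is not symmetric: in the forward direction one adds a small explicit term to a known pullback).

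Second, you explicitly leave unresolved the positivity of $\Omega_U^{(1,1)}$ after the correction (``I expect to resolve this by controlling the size and radial profile\dots''), and that is precisely the hard analytic content: closedness and positivity of the $(1,1)$-part fight each other under cut-offs, which is why blow-down statements of this type (Miyaoka \cite{miyaoka} for K\"ahler, Fino--Tomassini \cite{fino-tomassini} for pluriclosed) are theorems rather than exercises. The paper avoids both issues by decoupling the two constraints. It extracts the pluriclosed metric $\Omega^{(1,1)}$ and invokes \cite[Theorem 4.3]{fino-tomassini} to extend it across $p$ to a pluriclosed metric $\omega$ on $M$, so positivity is handled by a quoted extension theorem; closedness is then restored cohomologically: since $d\tilde\Omega=0$ forces $\partial\Omega^{(1,1)}=\overline{\partial}\Omega^{(2,0)}$, the class $[\partial\Omega^{(1,1)}]$ vanishes in $H^{2,1}_{\overline{\partial}}(\tilde M)$, and since $\pi^*\colon H^{2,1}_{\overline{\partial}}(M)\to H^{2,1}_{\overline{\partial}}(\tilde M)$ is an isomorphism with $\pi^*[\partial\omega]=[\partial\Omega^{(1,1)}]$, one gets $\partial\omega=\overline{\partial}\alpha$ for a $(2,0)$-form $\alpha$, and $\alpha+\omega+\overline{\alpha}$ is the desired Hermitian-symplectic form on $M$. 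To complete your route you would essentially have to reprove the Fino--Tomassini extension theorem; otherwise the argument should be restructured along these cohomological lines.
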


\begin{proof}
Let $\Omega$ be a Hermitian-symplectic structure on $\tilde M$, and consider its decomposition into pure-type components: $\Omega=\Omega^{(2,0)}+\Omega^{(1,1)}+\overline{\Omega^{(2,0)}}$. We know that $\partial\Omega^{(2,0)}=0$ and $\overline\partial\Omega^{(2,0)}=\partial\Omega^{(1,1)}$, in particular $\Omega^{(1,1)}$ is a pluriclosed metric on $\tilde M$.

If $E=\pi^{-1}(p)$ denotes the exceptional divisor of the blow-up, then $\Omega$ induces a Hermitian-symplectic structure on $M\setminus\{p\}\simeq\tilde M\setminus E$, and we want to extend it across $p$.
By \cite[Theorem 4.3]{fino-tomassini}, we know that $\Omega^{(1,1)}$ can be extended to a pluriclosed metric $\omega$ on $M$. Moreover, by the proof of \cite[Theorem 4.5]{fino-tomassini}, $\pi^*[\partial\omega] = [\partial\Omega^{(1,1)}] \in H^{2,1}_{\overline\partial}(\tilde M)$, where $\pi^*\colon H^{2,1}_{\overline\partial}(M) \to H^{2,1}_{\overline\partial}(\tilde M)$ is an isomorphism. Then $[\partial\omega]\in H^{2,1}_{\overline\partial}(M)$ vanishes: there exists a $(2,0)$-form $\alpha$ such that $\partial\omega=\overline\partial\alpha$. Therefore $\alpha+\omega+\overline\alpha$ is a Hermitian-symplectic form on $M$
\end{proof}

Apart from Inoue surface $\mathcal{S}_z$, with $z\notin \mathbb{R}$, \cite{inoue, bel00}, no other example of non-LCK manifold that supports taming LCS forms is known. Finding such examples  seems a rather challenging problem and we show below that the class of Kato manifolds in any complex dimension cannot help in this regard. Namely, we show that the existence of LCS taming structures on Kato manifolds is equivalent to the existence of LCK metrics.

Kato manifolds were introduced in \cite{kato} and are associated to a data consisting of a modification of the standard disc $\mathbb{B} \subseteq \mathbb{C}^n$ above the origin $\pi: \hat{\mathbb{B}} \rightarrow \mathbb{B}$ and a holomorphic embedding $\sigma: \mathbb{B} \rightarrow \hat{\mathbb{B}}$. The modification $\pi$ induces natural modifications $\pi: \hat{\mathbb{C}} \rightarrow \mathbb{C}$ and $\pi: \mathbb{C}\hat{\mathbb{P}}^n \rightarrow \mathbb{C}\mathbb{P}^n$, where $\mathbb{C}\hat{\mathbb{P}}^n$ is the compactification of $\hat{\mathbb{C}}$ with a hyperplane at infinity. A detailed description can be found in \cite{iopr}.

\begin{proposition}\label{prop:kato}
Let $X$ be the Kato manifold associated to the data $\pi: \hat{\mathbb{B}} \rightarrow \mathbb{B}$ and $\sigma: \mathbb{B} \rightarrow \hat{\mathbb{B}}$. Then $X$ admits an LCS form taming the complex structure if and only if $X$ admits a locally conformally K\" ahler metric. 
\end{proposition}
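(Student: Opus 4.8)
The plan is to prove the two implications separately: the reverse implication holds on any complex manifold and is immediate, while the forward one is the substance and must be extracted from the specific structure of Kato manifolds via \cite{iopr}. For the easy direction, if $X$ carries an LCK metric with fundamental form $\omega$ and Lee form $\theta$, then $\omega$ is a positive $(1,1)$-form satisfying $d\omega=\theta\wedge\omega$ with $\theta$ closed; in particular $\omega$ is itself a locally conformally symplectic form whose $(1,1)$-component is $\omega$ itself, hence positive, so it tames $J$. No use of the Kato structure is made here.

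For the converse, let $\Omega=\Omega^{(2,0)}+\Omega^{(1,1)}+\overline{\Omega^{(2,0)}}$ be an LCS form taming $J$, with closed Lee form $\theta=\theta^{(1,0)}+\theta^{(0,1)}$ and $\Omega^{(1,1)}>0$. The obstruction to obtaining an LCK metric is the pure part $\Omega^{(2,0)}$. Writing $d_\theta:=d-\theta\wedge$ (so that $d_\theta\Omega=0$ and, since $\theta$ is closed, $d_\theta^2=0$), together with $\partial_\theta:=\partial-\theta^{(1,0)}\wedge$ and $\overline{\partial}_\theta:=\overline{\partial}-\theta^{(0,1)}\wedge$, the $(3,0)$- and $(2,1)$-components of $d\Omega=\theta\wedge\Omega$ read
\begin{equation*}
\partial_\theta\Omega^{(2,0)}=0,\qquad \overline{\partial}_\theta\Omega^{(2,0)}+\partial_\theta\Omega^{(1,1)}=0 .
\end{equation*}
Thus $\Omega^{(2,0)}$ is $\partial_\theta$-closed. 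The first step is to show that on a Kato manifold this class is $\partial_\theta$-exact, i.e.\ there is a $(1,0)$-form $\beta$ with $\partial_\theta\beta=\Omega^{(2,0)}$. Granting this, the real $(1,1)$-form
\begin{equation*}
\Omega' := \Omega-d_\theta\beta-\overline{d_\theta\beta}=\Omega^{(1,1)}-2\,\Re\!\big(\overline{\partial}_\theta\beta\big)
\end{equation*}
has vanishing $(2,0)$- and $(0,2)$-parts and still satisfies $d_\theta\Omega'=0$, i.e.\ $d\Omega'=\theta\wedge\Omega'$; so the only remaining issue is whether $\Omega'$ is positive, in which case it is the sought LCK metric.

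The cohomological input for the first step will come from \cite{iopr}: a Kato manifold $X$ of dimension $n$ has $b_1(X)=1$, which pins the Lee class down to a scale, together with the vanishing of the relevant degree-two pure invariants, notably $H^{2,0}_{\overline{\partial}}(X)=0$. To convert this into the needed $\partial_\theta$-exactness I would pass to the minimal covering $p\colon\tilde X\to X$ on which $p^*\theta=dg$ becomes exact: by the Kato construction $\tilde X$ is a modification of $\mathbb{C}^n\setminus\{0\}$ carrying a $\mathbb{Z}$-action by the contraction germ, modelled on the explicit spaces $\hat{\mathbb{C}}$ and $\mathbb{C}\hat{\mathbb{P}}^n$ of \cite{iopr}. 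There $\tilde\Omega:=e^{-g}\,p^*\Omega$ is a genuine closed taming form and the generator $\gamma$ of the deck group scales it, $\gamma^*\tilde\Omega=\lambda\,\tilde\Omega$ with $\lambda>0$; on this simple model the $\partial$-closed form $\tilde\Omega^{(2,0)}$ admits a primitive, which one must choose $\gamma$-equivariant so that the correction descends to $X$.

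The hard part will be the positivity of $\Omega'$. Removing $\Omega^{(2,0)}$ cohomologically is controlled by the vanishing above, but the correction $2\,\Re(\overline{\partial}_\theta\beta)$ perturbs the positive form $\Omega^{(1,1)}$, and there is no a priori bound making the difference positive—this is exactly the standard difficulty in passing from a tamed to a compatible structure. I expect the resolution to rest on the explicit geometry of the cover $\tilde X$: the primitive $\beta$ produced on the model $\hat{\mathbb{C}}$, chosen with the correct character $\lambda$ under $\gamma$ and with controlled growth, yields a correction whose $(1,1)$-contribution is dominated by the pluriclosed metric $\tilde\Omega^{(1,1)}$ descending from $X$, so that $\Omega'$ stays positive and is an honest LCK form. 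Equivalently, one may phrase the whole argument as matching a single condition on the Kato data $\pi\colon\hat{\mathbb{B}}\to\mathbb{B}$, $\sigma\colon\mathbb{B}\to\hat{\mathbb{B}}$ that, by \cite{iopr}, characterizes the existence of an LCK metric and, by the computation above, equally characterizes the existence of a taming LCS form.
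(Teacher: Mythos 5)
There is a genuine gap, and it sits exactly where you flagged it yourself: the positivity of the corrected form $\Omega' = \Omega^{(1,1)} - 2\,\Re(\overline{\partial}_\theta\beta)$. Subtracting a $d_\theta$-exact correction to kill $\Omega^{(2,0)}$ gives no control whatsoever on the $(1,1)$-part of the correction, and "choosing $\beta$ with controlled growth on the cover so that the correction is dominated by $\tilde\Omega^{(1,1)}$" is not an argument — it is a restatement of the tamed-to-compatible problem, which is precisely the open question of Streets--Tian and Li--Zhang cited in this very paper (even in the untwisted, $\theta=0$ case nobody knows how to do this in dimension $\geq 3$). Your cohomological step is also unjustified: $\partial_\theta$-exactness of $\Omega^{(2,0)}$ does not follow from $H^{2,0}_{\overline{\partial}}(X)=0$, since twisted (Morse--Novikov-type) cohomology with respect to $\theta$ is not computed by the untwisted Dolbeault groups, and the equivariance requirement on the primitive $\beta$ under the deck transformation (with the correct character $\lambda$) is exactly the kind of constraint that typically obstructs descent. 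So both pillars of your plan are unproven, and the second one is a known open problem in disguise.

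The paper avoids this entirely by never attempting to deform $\Omega$ into an LCK metric on $X$. Instead it uses only the pluriclosed metric $\Omega^{(1,1)}$ obtained after trivializing $\theta$ on the universal cover: by \cite[Proposition 2.4]{iopr} the cover contains a copy of $\hat{\mathbb{B}}\setminus\{\sigma(0)\}$, so this punctured modification carries a pluriclosed metric; the extension and gluing theorems of \cite{fino-tomassini} then produce a pluriclosed metric on the compactification $\mathbb{C}\hat{\mathbb{P}}^n$; since $\mathbb{C}\hat{\mathbb{P}}^n$ is Moishezon, Chiose's theorem \cite[Theorem 2.2]{chiose} forces it to be K\"ahler; and finally \cite[Theorem 10.3]{iopr} converts K\"ahlerness of the modification into an LCK metric on $X$ by the Kato construction itself. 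The LCK metric produced this way has nothing to do with correcting $\Omega$ — the taming form is used only as a certificate that the associated modification is K\"ahler. Your closing sentence gestures at this ("matching a single condition on the Kato data that characterizes LCK"), which is indeed the right idea, but to make it work you must route through pluriclosedness, the Fino--Tomassini extension results, and Chiose's Moishezon criterion rather than through a twisted $\partial\overline{\partial}$-correction on $X$.
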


\begin{proof} We prove only the direct implication, since the converse is clear. Assume that $X$ admits an LCS form $\omega$ that tames the complex structure, with Lee form $\theta$. Let $\pi\colon \tilde{X} \rightarrow X$ be the universal cover of $X$. Then $e^{-f}\pi^*\omega$ is a globally conformal Hermitian-symplectic form on $\tilde{X}$. By \cite[Proposition 2.4]{iopr}, there exists an open holomorphic embedding of $\hat{\mathbb{B}}\setminus\{\sigma(0)\}$ into $\tilde{X}$, therefore $\hat{\mathbb{B}} \setminus \{\sigma(0)\}$ admits a Hermitian-symplectic metric, in particular $\hat{\mathbb{B}} \setminus \{\sigma(0)\}$ admits a pluriclosed metric. Consequently, by \cite[Theorem 4.3]{fino-tomassini}, $\hat{\mathbb{B}}$ admits a pluriclosed metric $\hat{\omega}$. Moreover, by \cite[Proposition 4.4]{fino-tomassini} we obtain that $\hat{\mathbb{C}}^n$ admits a pluriclosed metric $\tilde{\omega}$ that glues to the Fubini-Study metric on $\mathbb{C}\hat{\mathbb{P}}^n \setminus \hat{\mathbb{B}}$. Therefore, $\mathbb{C}\hat{\mathbb{P}}^n$ admits a pluriclosed metric. Finally, since $\mathbb{C}\hat{\mathbb{P}}^n$ is a Moishezon manifold, \cite[Theorem 2.2]{chiose} implies that $\mathbb{C}\hat{\mathbb{P}}^n$ is necessarily K\" ahler and \cite[Theorem 10.3]{iopr} concludes that $X$ is locally conformally K\" ahler.  
\end{proof}

\end{document}